\documentclass[12pt,reqno]{amsart}

\usepackage[T1]{fontenc}
\usepackage[utf8]{inputenc}
\usepackage{amsmath,amsfonts,amssymb,amsthm,mathtools}
\usepackage{tikz}
\usepackage{hyperref}
\hypersetup{hidelinks}
\usepackage{microtype}

\numberwithin{equation}{section}

\theoremstyle{plain}
\newtheorem{theorem}{Theorem}[section]
\newtheorem{lemma}[theorem]{Lemma}
\newtheorem{proposition}[theorem]{Proposition}

\theoremstyle{definition}
\newtheorem{definition}[theorem]{Definition}
\newtheorem{question}[theorem]{Question}

\newtheorem{remark}[theorem]{Remark}

\title[Why Does the Slope Definition of $e$ Work?]
{Why Does the Slope Definition of $e$ Work?:\\
A Self-Contained Exploration After Real Analysis}
\author{H\`ung Vi\d{\^e}t Chu}
\address{Department of Mathematics, Washington and Lee University, Lexington, VA 24450, USA}
\email{hchu@wlu.edu}

\author{Steven J. Miller}
\address{Department of Mathematics and Statistics, Williams College, Williamstown, MA 01267, USA}
\email{sjm1@williams.edu}

\author{Joshua M. Siktar}
\address{Department of Mathematics, Texas A\&M University, College Station, TX 77843, USA}
\email{jmsiktar@tamu.edu}

\subjclass[2020]{97I40 (primary); 97D40  (secondary)}
\keywords{Euler's number, exponential function, convexity, tangent slope, calculus, monotone limits}

\begin{document}

\begin{abstract}
Euler's number $e$ is often introduced through compound interest, an infinite series, an area, or the slope of an exponential function.  The last viewpoint is especially attractive: choose the base $b\in (2,3)$ so that the graph of $y=b^x$ has tangent slope $1$ at $(0,1)$.  However, a curious student may raise several questions that a brief introduction necessarily leaves implicit. What does $b^x$ mean when $x$ is irrational? Why is $b^x$ differentiable at $x = 0$?  How can one  show that the slopes corresponding to bases $2$ and $3$ lie on opposite sides of slope $1$? Finally, why is there a unique base whose slope is exactly $1$? The paper provides an answer using tools from a first course in real analysis, including limits, continuity, derivatives, and the Intermediate Value Theorem. Its primary audience is undergraduate students familiar with standard concepts from an introductory analysis course who wish to apply what they have learned to explore questions left open in earlier courses. A secondary audience is instructors who may use selected portions as supplementary material in an introduction-to-proofs course. The mathematical ingredients are classical; our aim is to organize them into a self-contained exposition and to make explicit the logical steps that are often compressed in standard treatments.
\end{abstract}

\maketitle
\tableofcontents

\section{Why another introduction to $e$?}\label{sec:introduction}

Euler's number
$$e \ \approx\ 2.718281828459045\ldots$$
is one of the most recognizable constants in mathematics.  Together with $\pi$ and the golden ratio $\varphi$, it is among the constants that students often recognize long before they study its theory.  The constants $e$ and $\pi$ meet in Euler's identity
\begin{equation}
e^{i\pi}+1\ =\ 0,
\end{equation}
a formula celebrated for bringing together the constants $0$, $1$, $e$, $\pi$, and $i = \sqrt{-1}$ in a single line.

Students encounter $e$ in several apparently different ways.  One comes from finance.  If a principal amount $P$ is invested for one year at annual interest rate $r$, and the interest is compounded $n$ times during the year, then the balance is
$$
 P\left(1+\frac{r}{n}\right)^n.
$$
Taking $P=r=1$ and allowing the compounding frequency to increase leads to
\begin{equation}\label{eq:compound} e\ =\ \lim_{n\to\infty}\left(1+\frac1n\right)^n.\end{equation}
This is why the product limit is naturally connected with ``continuous compounding,'' rather than being an isolated limit invented for calculus.

From power series, one meets
\begin{equation}\label{eq:series}
 e\ =\ \sum_{n=0}^{\infty}\frac1{n!}.
\end{equation}
The binomial theorem already hints at the connection between \eqref{eq:compound} and \eqref{eq:series}:
$$
\left(1+\frac1n\right)^n\ =\ \sum_{k=0}^n \frac{n(n-1)\cdots(n-k+1)}{k! n^k},
$$
and, for each fixed $k$, the  coefficient
$$\frac{n(n-1)\cdots(n-k+1)}{n^k}$$
tends to $1$ as $n\to\infty$. 

The series description immediately gives the familiar estimate $2<e<3$.  Indeed,
$$e\ >\ 1+1\ =\ 2,$$
and, since 
$$(3+j)!\ = \ 3! (3+1)(3+2)\cdots (3+j)\ \ge\  3! 3^j\mbox{ for }j\ge 1,$$
we have
$$e\ =\ 1+1+\frac1{2!}+\frac{1}{3!} + \sum_{j=1}^{\infty}\frac1{(3+j)!}
 \ \le\ \frac83+\frac1{3!}\sum_{j=1}^{\infty}\frac1{3^j}
 \ =\ \frac{11}{4}\ <\ 3.$$
This estimate is not part of our later construction of $e$, but it helps explain why bases $2$ and $3$ are natural integers to compare.

A third characterization, once integration is available, is
\begin{equation}\label{eq:area}
 \int_1^e \frac{1}{x}\,dx\ =\ 1.
\end{equation}
These viewpoints are equivalent once enough tools have been developed, but their equivalence is not immediate from the definitions themselves.

Stewart \cite{Ste} presents another viewpoint that is especially well-suited to a course centered on continuity and derivatives.  Consider the family of exponential functions
$$y\ =\ b^x,\quad b>0.$$
Every graph passes through $(0,1)$.  Numerical experimentation suggests that the tangent slope there is about $0.69$ when $b=2$ and about $1.10$ when $b=3$.  For example, a calculator gives
$$\frac{2^{0.001}-1}{0.001}\ \approx\ 0.6934\quad\mbox{ and }\quad\frac{3^{0.001}-1}{0.001}\ \approx\ 1.0992.$$
These computations are only a preview: they do not prove that either derivative exists.  They do, however, suggest that there should be a base between $2$ and $3$ for which the slope is exactly $1$. See Figure \ref{fig:exp-family}.

Before writing down the derivative, there is a point worth making explicit: what does $b^h$ mean when $h$ is irrational?  Integer and rational powers can be defined algebraically.  A standard construction extends them continuously to real exponents.  For example, one may choose rational numbers $r_n\to x$ and define
$$b^x\ :=\ \lim_{n\to\infty}b^{r_n}.$$
One must then prove that the limit is independent of the chosen rational approximations and that the familiar exponent laws continue to hold. We summarize this construction in Appendix~\ref{app:real-exponents}; see also Badi \cite{Badi} and Eidswick \cite{Eidswick}.  The continuous extension gives
\begin{equation}\label{eq:exp-laws}
 b^0\ =\ 1,\quad b^{x+y}\ =\ b^x b^y, \mbox{ for all real numbers }x, y.
\end{equation}

We recall the slope formula 
\begin{equation}\label{eq:defg}
 g(b)\ :=\ \lim_{h\to0}\frac{b^h-1}{h},
\end{equation}
provided the limit exists.  If it does, then $g(b)$ is precisely the derivative of $b^x$ at $x=0$.  For $b>1$, the right-hand quotients
$$
\frac{b^h-1}{h},\quad h>0,
$$
increase when the base $b$ increases, so one already expects the slope $g(b)$, if it exists, to increase with the base.  We will later prove the stronger statement that $g$ is continuous and strictly increasing on $(0,\infty)$. 

\begin{figure}[ht]
\centering
\begin{tikzpicture}[x=3.0cm,y=1.2cm]
  \draw[->] (-0.35,0) -- (1.20,0) node[right] {$x$};
  \draw[->] (0,-0.5) -- (0,3.65) node[above] {$y$};
  \draw[thick,dashed,domain=-0.25:1.05,samples=80,smooth] plot (\x,{exp(ln(2)*\x)});
  \draw[thick,domain=-0.25:1.05,samples=80,smooth] plot (\x,{exp(1*\x)});
  \draw[thick,dotted,domain=-0.25:1.05,samples=80,smooth] plot (\x,{exp(ln(3)*\x)});
  \fill (0,1) circle (1.2pt);
  \node[left] at (0,1) {$(0,1)$};
  \node[right] at (1.01,2.05) {$2^x$};
  \node[right] at (0.96,2.72) {$b^x$ with $b\approx2.72$};
  \node[right] at (0.88,3.18) {$3^x$};
\end{tikzpicture}
\caption{A numerical preview.  The middle base appears to have tangent slope close to $1$ at $(0,1)$.  The picture motivates the argument; it does not replace it.}
\label{fig:exp-family}
\end{figure}

The introduction of $e$ using slopes immediately raises three natural questions.

\begin{question}\label{q:existence}
For each $b>0$, why does the limit in \eqref{eq:defg} exist?
\end{question}

\begin{question}\label{q:bounds}
How can we prove
$$g(2)\ <\ 1\ <\ g(3)?$$
\end{question}

\begin{question}\label{q:ivt}
How do we know that some $b$ satisfies $g(b)=1$?  The Intermediate Value Theorem (IVT) suggests the answer, but first we must verify that $g$ is continuous.  Moreover, if such a base exists, is it unique? Why or why not? 
\end{question}

The issue of real exponents matters pedagogically.  Students may feel that they already know how to differentiate powers, but the problems
$$
 \frac{d}{dx}x^3,\quad \frac{d}{dx}x^{3/2},\quad \frac{d}{dx}x^{\sqrt3}
$$
are not identical.  The first can be handled directly with the binomial theorem, and the second can be reduced to an algebraic relation such as $y^2=x^3$, while the third forces us to confront what an irrational exponent means.  Appendix~\ref{app:three-powers} works through these three methods in more detail.

\subsection{Audience and prerequisites}\label{subsec:audience}

Our primary audience is undergraduate students who are familiar with standard concepts and results from a first course in analysis, including limits, continuity, derivatives, and the Intermediate Value Theorem.  The appendices supply optional background on real exponents and on the different methods used for integer, rational, and irrational powers.   The article is also intended for instructors seeking supplementary reading that uses questions about the number $e$ as the framework to introduce students to rigorous mathematical proofs.

One point we wish to emphasize throughout this article is the importance and joy of mathematical discovery and inquisitive thinking. An important role of instructors is to foster students' questioning, which can lead to deeper understanding and even research projects \cite{CMS}. In our experience, students often find mathematics most rewarding when ideas from a higher-level course allow them to revisit and deepen their understanding of material encountered earlier. Learning becomes especially meaningful when new concepts connect with, explain, or illuminate previous ones, gradually forming a coherent network of mathematical knowledge.

Finally, knowing an answer is different from understanding why it is true. A completed crossword puzzle may be easy to check, but simply seeing the filled-in answers does little to prepare you to solve the next one. Other common examples include a completed Sudoku or jigsaw puzzle. Mathematics is no different: seeing a finished argument is not the same as discovering why it works. This distinction reflects the well-studied contrast between active and passive learning.

\subsection{Earlier approaches and why revisit the topic?}\label{subsec:literature}

The problem of how to introduce $e$ is classical.  Menger \cite{Menger} explicitly approached it as a problem in the teaching of calculus: he compared the family $y=a^x$ with the line $y=x+1$ and used the geometry of the intersections to motivate the unique base whose tangent at $(0,1)$ has slope $1$.  Tull \cite{Tull} gave a brief discovery-oriented treatment in which students observe from graphs that the derivative at the origin appears to increase with the base and hence that there ``ought to be'' a distinguished base with slope $1$. 

Meanwhile, Samelson \cite{Samelson} supplied a concise rigorous route through convexity.  His key observation is that convexity makes the secant quotient $(a^h-1)/h$ monotone in $h$, giving the existence of the derivative at the origin.  Eidswick \cite{Eidswick} pursued the same differentiability question by a different route, proving monotonicity first on rational arguments and then passing to the reals by continuity.  Badi \cite{Badi} later developed this general program in substantially greater analytic detail, including a construction of real powers and a systematic development of exponential and logarithmic functions.  More recently, Meintrup and Singer \cite{MeintrupSinger} proposed a different elementary route based on a family of integrals approximating the logarithm.

Why, then, another paper on $e$? Our contribution is not a new definition of $e$, a new proof of differentiability, or a new use of convexity. Rather, we aim to make this classical route accessible from the perspective of undergraduate students who have just completed an introductory course in real analysis. We present rigorous mathematical proofs in a conversational and exploratory manner, rather than as a sequence of formulas and conclusions. The short notes of Tull, Samelson, and Eidswick are elegant, but they move at a pace appropriate for more advanced students. Badi’s treatment is more general, developing exponential and logarithmic functions in full generality rather than focusing specifically on $e$. While this broader perspective has clear advantages, its generality may also be somewhat intimidating to students encountering the subject for the first time. Our approach instead keeps the number $e$ at the center of the discussion.  Our focus is on the three Questions \ref{q:existence}, \ref{q:bounds}, and \ref{q:ivt} arising from the introduction of $e$. We answer them in the order a student is likely to encounter them, make the hidden prerequisites explicit, and separate motivational observations from proofs.  The resulting argument shows not only that the familiar definition works, but why each logical step is needed. We also stress that this exposition introduces mathematical techniques students will likely see elsewhere, including density arguments to extend results on $\mathbb{Q}$ to $\mathbb{R}$ and careful limit computations to prove existence of derivatives.

The central mathematical facts are therefore classical; the pedagogical point is the order and level of explanation.  Convexity answers the existence question, elementary secant estimates locate the desired slope between bases $2$ and $3$, and a multiplicative identity gives continuity and uniqueness.  The resulting base is exactly the one whose exponential function differentiates to itself. 

Beyond establishing the slope-based definition of $e$, we also
develop several consequences designed to show students why this particular base
deserves to be called natural.  Section~6 shows that once the derivative at the
origin is understood, the exponent law immediately determines the derivative of
$b^x$ at every point, and the distinguished choice $g(e)=1$ yields the
exceptionally simple identity
$\frac{d}{dx}e^x=e^x$.
From there, the logarithm and the general power rule fit naturally into the same
framework.  Section~7 then places $e$ in a broader mathematical context through
examples from optimization, data representation, combinatorics, and probability,
illustrating how the same constant emerges for reasons that at first appear
unrelated.  The appendices complement this development by making explicit several
points that are often passed over quickly in calculus: the construction of real
powers, the genuinely different methods required to differentiate integer,
rational, and irrational powers, and the elementary discrete optimization behind
the preference for summands equal to $3$.  Thus, the paper aims not only to justify
one definition of $e$, but also to use that definition as a gateway to a network
of ideas connecting analysis with other areas of mathematics.


\section{Three elementary tools}\label{sec:tools}

Our proof uses three ideas: a monotone-limit principle, convexity, and the IVT.  Each is standard in real analysis, but each can also be motivated geometrically in a calculus course; see, for example, Abbott \cite{Ab}.

\subsection{A monotone-limit principle}

The following is the one-sided functional version of the familiar fact that a bounded monotone sequence converges. For sake of clarity, a function $F$ is \textbf{non-decreasing} if $F(x) \leq F(y)$ whenever $x > y$.

\begin{theorem}[Monotone-limit principle]\label{thm:monotone-limit}
Suppose $F:(0,a]\to\mathbb{R}$ is non-decreasing and bounded below.  Then the right-hand limit
$$
 \lim_{x\to0^+}F(x)
$$
exists and equals $\inf\{F(x):0<x\le a\}$.
\end{theorem}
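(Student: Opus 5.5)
The plan is to identify the candidate limit first and then verify the $\varepsilon$--$\delta$ definition directly. I read ``non-decreasing'' in the usual sense, namely $F(x)\le F(y)$ whenever $0<x<y\le a$; this is the reading under which the infimum is the limit at $0^+$. The displayed definition, with the inequality attached to $x>y$, appears to have its roles reversed, and under it the limit would instead be a supremum.

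\emph{Step 1: the infimum exists.} The set $S=\{F(x):0<x\le a\}$ is nonempty, since it contains $F(a)$, and it is bounded below by hypothesis. By the completeness axiom of $\mathbb{R}$ (the greatest lower bound property), $L:=\inf S$ exists as a real number. This is the only place where completeness enters, and it is the real content of the theorem, exactly as in the proof that a bounded monotone sequence converges.

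\emph{Step 2: verify the limit.} Fix $\varepsilon>0$. Since $L+\varepsilon$ is not a lower bound for $S$, there is some $x_0\in(0,a]$ with $F(x_0)<L+\varepsilon$. Set $\delta:=x_0$. For any $x$ with $0<x<\delta$, monotonicity gives $F(x)\le F(x_0)<L+\varepsilon$, and the definition of the infimum gives $F(x)\ge L$. Hence
$$
 L\ \le\ F(x)\ <\ L+\varepsilon \qquad\text{for all } 0<x<\delta,
$$
so $|F(x)-L|<\varepsilon$. Therefore $\lim_{x\to0^+}F(x)=L$.

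There is no serious obstacle here. The one point requiring care is the direction of monotonicity: the point $x_0$ must control \emph{all smaller} $x$, and that is exactly what the non-decreasing hypothesis supplies. Its other role is to recognize that the lower bound is needed only so that the infimum is finite. Without it the same argument would show $F(x)\to-\infty$.
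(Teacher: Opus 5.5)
Your proof is correct and is essentially the paper's argument: you let $L$ be the infimum, pick $x_0$ with $F(x_0)<L+\varepsilon$, and use monotonicity to trap $F(x)$ in $[L,L+\varepsilon)$ for all $0<x<x_0$. You are also right about the displayed definition: the paper's own proof uses $F(x)\le F(x_0)$ for $x<x_0$, which is the usual meaning of non-decreasing, so the stated condition ``whenever $x>y$'' is a reversed typo.
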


The picture behind the theorem is simple.  Because $F$ is non-decreasing, moving toward $0$ from the right can only move the function values downward; since they are bounded from below, they cannot decrease forever without approaching a finite level.  The two common obstructions to a finite limit, namely persistent oscillation and escape to infinity, have both been ruled out.

\begin{proof}
Let
$$
L\ :=\ \inf\{F(x):0<x\le a\}.
$$
Fix $\varepsilon>0$.  Since $L$ is the greatest lower bound, there is some $x_0\in(0,a]$ for which
$$
L\ \le\ F(x_0) \ <\ L+\varepsilon.
$$
Whenever $0<x<x_0$, monotonicity gives $F(x)\le F(x_0)$, while $L\le F(x)$ because $L$ is a lower bound.  Thus,
$$
L\ \le\ F(x)<L+\varepsilon,
$$
so $|F(x)-L|<\varepsilon$.  This is exactly the definition of $F(x)\to L$ as $x\to0^+$, where $x\to 0^+$ means that $x\to 0$ through positive values of $x$.
\end{proof}

\subsection{Convexity}

\begin{definition}\label{def:convex}
A function $f:I\to\mathbb{R}$ on an interval $I$ is \textit{convex} if, for all $x,y\in I$ and all $t\in[0,1]$,
\begin{equation}\label{eq:convex}
 f(tx+(1-t)y)\ \le\  t f(x)+(1-t)f(y).
\end{equation}
\end{definition}

Geometrically, the chord joining two points of the graph lies on or above the graph between those points (see Figure \ref{fig:convex}).

\begin{figure}[ht]
\centering
\begin{tikzpicture}[x=2.0cm,y=1.2cm]
  \draw[->] (-1.45,0) -- (1.65,0) node[right] {$x$};
  \draw[->] (0,-0.15) -- (0,2.55) node[above] {$y$};
  \draw[thick,domain=-1.25:1.45,samples=80,smooth] plot (\x,{\x*\x});
  \coordinate (A) at (-1,1);
  \coordinate (B) at (1.25,1.5625);
  \draw[thick,dashed] (A)--(B);
  \fill (A) circle (1.3pt) node[above left] {$(x_1,f(x_1))$};
  \fill (B) circle (1.3pt) node[above right] {$(x_2,f(x_2))$};
  \node at (0.92,0.35) {$y=x^2$};
\end{tikzpicture}
\caption{For a convex function, the graph lies below the chord joining any two of its points.}
\label{fig:convex}
\end{figure}

A convenient special case is midpoint convexity. In general, evaluation of functions at midpoints often gives useful information about the functions (see \cite{burden2011numerical}).

\begin{definition}\label{def:midpoint}
A function $f:I\to\mathbb{R}$ is \textit{midpoint convex} if
$$
 f\!\left(\frac{x+y}{2}\right)\ \le\ \frac{f(x)+f(y)}{2}\mbox{ for all }x,y\in I.$$
\end{definition}

Midpoint convexity alone is weaker than convexity because it requires the convexity inequality only at the midpoint, rather than at every point on the line segment.  Continuity supplies the missing step. 

\begin{lemma}\label{lem:midpoint-to-convex}
Every continuous midpoint-convex function on an interval is convex.
\end{lemma}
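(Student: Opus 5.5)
The plan is the classical two-stage argument: first establish the convexity inequality for all dyadic rationals $t\in[0,1]$, then pass to arbitrary $t$ using continuity and the density of dyadic rationals. Fix $x,y\in I$ and write
$$
\phi(t)\ :=\ f(tx+(1-t)y),\qquad \ell(t)\ :=\ tf(x)+(1-t)f(y),\qquad t\in[0,1].
$$
Since $f$ is continuous and $t\mapsto tx+(1-t)y$ is continuous with values in $I$ (an interval), $\phi$ is continuous on $[0,1]$; also $\ell$ is affine. Midpoint convexity of $f$ transfers to $\phi$: for $s,t\in[0,1]$, the point corresponding to $(s+t)/2$ is the midpoint of the points corresponding to $s$ and $t$, so $\phi\bigl(\tfrac{s+t}{2}\bigr)\le \tfrac{\phi(s)+\phi(t)}{2}$. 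The goal is $\phi(t)\le \ell(t)$ for all $t\in[0,1]$.

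\textbf{Step 1 (dyadic points).} Let $D_n=\{k/2^n:0\le k\le 2^n\}$. I prove by induction on $n$ that $\phi(t)\le\ell(t)$ for all $t\in D_n$. The case $n=0$ is equality at $t=0,1$. For the inductive step, a point of $D_{n+1}\setminus D_n$ has the form $t=(2j+1)/2^{n+1}$, which is the midpoint of the consecutive points $s=j/2^n$ and $u=(j+1)/2^n$ of $D_n$. Then
$$
\phi(t)\ \le\ \frac{\phi(s)+\phi(u)}{2}\ \le\ \frac{\ell(s)+\ell(u)}{2}\ =\ \ell(t),
$$
using midpoint convexity, the inductive hypothesis, and the fact that $\ell$ is affine (so it commutes with averaging).

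\textbf{Step 2 (density and continuity).} For arbitrary $t\in[0,1]$, choose dyadic rationals $t_n=\lfloor 2^n t\rfloor/2^n\in D_n$, so $|t_n-t|\le 2^{-n}\to0$. Step 1 gives $\phi(t_n)\le\ell(t_n)$ for every $n$. Letting $n\to\infty$ and using continuity of $\phi$ and $\ell$, limits preserve non-strict inequalities, so $\phi(t)\le\ell(t)$, which is \eqref{eq:convex}.

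The main obstacle is organizing Step 1 correctly. The tempting induction directly on $f$ with general weights runs into bookkeeping trouble. The reparametrization by $\phi$ and the observation that new dyadic points are midpoints of \emph{adjacent} old ones make it clean. Step 2 is the place where continuity is genuinely needed: without it, pathological additive functions show the conclusion fails, which is worth a remark for students.
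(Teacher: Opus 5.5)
Your proof is correct and follows the same route as the paper: you establish the inequality at dyadic rationals $t=m/2^n$, then pass to arbitrary $t\in[0,1]$ using density and the continuity of $f$. Your reparametrization via $\phi$ and your explicit induction through midpoints of adjacent points of $D_n$ make rigorous the ``repeating the process'' step that the paper only sketches; the paper's $t=1/4$ computation is exactly one instance of your adjacent-midpoint step.
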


\begin{proof}
Midpoint convexity first gives \eqref{eq:convex} for $t=1/2$.  Applying the midpoint inequality again gives the cases $t=1/4$ and $t=3/4$.  For example,
$$
f\left(\frac{1}{4} x+\frac{3}{4} y\right)
\ \le\  \frac{1}{2} f\left(\frac{x+y}{2}\right)+\frac{1}{2} f(y)
\ \le\  \frac{1}{4} f(x)+\frac{3}{4} f(y).
$$
The case $t=3/4$ is analogous. Repeating the process gives coefficients with denominators $2,4,8,\ldots$. Thus, \eqref{eq:convex} holds for every $t$ of the form
$$
t\ =\ \frac{m}{2^n},\quad 0\le m\le 2^n.
$$
Numbers of this form are called \textit{dyadic rationals}.

Every $t\in[0,1]$ is the limit of a sequence of dyadic rationals $t_n$.  For each $n$,
$$
f(t_nx+(1-t_n)y)\ \le\ t_n f(x)+(1-t_n)f(y).
$$
Letting $n\to\infty$ and using continuity of $f$ on the left-hand side gives 
$$
f(tx+(1-t)y)\ \le\  t f(x)+(1-t)f(y),
$$
which is \eqref{eq:convex} for arbitrary $t$.
\end{proof}

\begin{remark}
Continuity is genuinely needed.  There exist discontinuous additive functions $A:\mathbb{R}\to\mathbb{R}$ satisfying 
$$
A\!\left(\frac{x+y}{2}\right)\ =\ \frac{A(x)+A(y)}{2},
$$
so they are midpoint convex \cite[p.~504]{GreenGustin}. However, such functions are not convex on $\mathbb{R}$ because every convex function is continuous on the interior of its domain
\cite[p.~68]{BoydVandenberghe}.  This example is optional background; no such pathology appears in the exponential functions considered below.
\end{remark}

We now record the convexity fact that drives the entire argument.

\begin{lemma}[Secant slopes of a convex function]\label{lem:secant}
Let $f$ be convex on an interval containing $0$, and define
$$Q(x)\ :=\ \frac{f(x)-f(0)}{x},\quad x\ne0.$$
Then $Q$ is non-decreasing on each side of $0$.  In particular, if $0<x<y$, then $Q(x)\le Q(y)$, and if $x<y<0$, then $Q(x)\le Q(y)$.
\end{lemma}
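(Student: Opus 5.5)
The plan is to derive both inequalities from a single application of the convexity inequality \eqref{eq:convex}, writing the middle point as a convex combination of the other point and $0$. The key observation is that in each case the three points $0$, $x$, $y$ lie in the interval, and one of them sits between the other two. We then choose $t$ so that the intermediate point equals $t\cdot(\text{outer point})+(1-t)\cdot(\text{other outer point})$, and rearrange.

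\emph{Case $0<x<y$.} Here $x$ lies between $0$ and $y$. Set $t=x/y\in(0,1)$, so that $x=ty+(1-t)\cdot 0$. Convexity gives
$$f(x)\ \le\ \frac{x}{y}f(y)+\Bigl(1-\frac{x}{y}\Bigr)f(0),$$
which rearranges to
$$f(x)-f(0)\ \le\ \frac{x}{y}\bigl(f(y)-f(0)\bigr).$$
Dividing by the positive number $x$ yields $Q(x)\le Q(y)$.

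\emph{Case $x<y<0$.} Now $y$ lies between $x$ and $0$. Set $t=y/x\in(0,1)$; this is positive because both $x$ and $y$ are negative. Then $y=tx+(1-t)\cdot 0$, and convexity gives
$$f(y)-f(0)\ \le\ \frac{y}{x}\bigl(f(x)-f(0)\bigr).$$
Dividing by $y<0$ reverses the inequality, giving
$$\frac{f(y)-f(0)}{y}\ \ge\ \frac{f(x)-f(0)}{x},$$
that is, $Q(x)\le Q(y)$.

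The only delicate point is the sign bookkeeping in the second case. One must check that $t=y/x$ really lies in $(0,1)$, which holds because $|y|<|x|$ with both negative. One must also remember that dividing by the negative $y$ flips the inequality, and it is exactly this flip that produces the non-decreasing direction. Everything else is a direct rearrangement, and no continuity or limit arguments are needed.
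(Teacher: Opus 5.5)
Your proof is correct and follows the paper's argument: the case $0<x<y$ is identical to the paper's, writing $x=\frac{x}{y}\,y+\bigl(1-\frac{x}{y}\bigr)\cdot 0$ and dividing by $x>0$. The paper only says the case $x<y<0$ is ``similar,'' and your version, with $t=y/x\in(0,1)$ and the inequality flip when dividing by $y<0$, correctly fills in that omitted case.
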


\begin{proof}
The statement says geometrically that, as the second endpoint of a chord moves to the right, the slope of the chord from $(0,f(0))$ cannot decrease.

Suppose first that $0<x<y$.  Since
$$x\ =\ \frac{x}{y}y+\left(1-\frac{x}{y}\right)0,$$
convexity gives
$$f(x)\ \le\  \frac{x}{y}f(y)+\left(1-\frac{x}{y}\right)f(0).$$
Subtracting $f(0)$ from both sides gives
$$
f(x)-f(0)\ \le\ \frac{x}{y}\bigl(f(y)-f(0)\bigr).
$$
Dividing by $x>0$ yields
$$
\frac{f(x)-f(0)}{x}\ \le\ \frac{f(y)-f(0)}{y},
$$
which is $Q(x)\le Q(y)$.

The case for $x<y<0$ is similar. 
\end{proof}

\subsection{The Intermediate Value Theorem}

We take the Intermediate Value Theorem as part of the standard first year calculus background  \cite{Ste}. A first course in analysis often covers a full proof of the theorem \cite[Exercise 4.5.1]{Ab}.

\begin{theorem}[Intermediate Value Theorem]\label{thm:ivt}
If $F$ is continuous on $[a,b]$ and a number $C$ lies between $F(a)$ and $F(b)$, then some $c\in[a,b]$ satisfies $F(c)=C$.
\end{theorem}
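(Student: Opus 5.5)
The plan is the standard bisection argument, driven by the completeness of the real numbers (the least upper bound property). We reduce to the case $F(a)<C<F(b)$. If $C$ equals $F(a)$ or $F(b)$, we take $c=a$ or $c=b$. If $F(a)>C>F(b)$, we apply the argument to $-F$ and $-C$. Setting $G:=F-C$, it suffices to show that a continuous $G$ on $[a,b]$ with $G(a)<0<G(b)$ has a zero.

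We build nested intervals $[a_n,b_n]$ with $a_0=a$ and $b_0=b$. At each step we let $m=(a_n+b_n)/2$. If $G(m)=0$ we are done. If $G(m)<0$ we set $[a_{n+1},b_{n+1}]=[m,b_n]$, and otherwise $[a_{n+1},b_{n+1}]=[a_n,m]$. By construction $G(a_n)<0<G(b_n)$ for every $n$, the sequence $(a_n)$ is non-decreasing and bounded above by $b$, $(b_n)$ is non-increasing and bounded below by $a$, and $b_n-a_n=(b-a)/2^n\to0$. The sequence version of the monotone-limit principle (the fact quoted before Theorem~\ref{thm:monotone-limit}) gives $a_n\to c$ and $b_n\to c'$. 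Since the lengths tend to $0$, $c=c'$, and clearly $c\in[a,b]$.

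Continuity of $G$ at $c$ then gives $G(c)=\lim G(a_n)\le0$ and $G(c)=\lim G(b_n)\ge0$, so $G(c)=0$, that is, $F(c)=C$. The one genuinely nontrivial ingredient is the existence of the common limit $c$. This is exactly where completeness of $\mathbb{R}$ enters, and it is the step I would emphasize. Over $\mathbb{Q}$ the theorem fails: $x^2-2$ on $[1,2]$ has no rational zero. Everything else is routine bookkeeping about signs and passing limits through continuous functions.

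An equivalent alternative would set $c=\sup\{x\in[a,b]:G(x)\le 0\}$ and rule out $G(c)<0$ and $G(c)>0$ using continuity. Either route fits the paper's style. The bisection version pairs naturally with Theorem~\ref{thm:monotone-limit}, since it also rests on monotone limits.
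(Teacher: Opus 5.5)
The paper does not prove this statement. It takes the Intermediate Value Theorem as standard background and refers the reader to Stewart and to an exercise in Abbott, so there is no in-paper argument to compare yours against.

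Your bisection proof is correct and self-contained. The following steps are all sound:
\begin{itemize}
\item the reduction to $G(a)<0<G(b)$, including the endpoint cases and the sign flip $F\mapsto -F$;
\item the invariant $G(a_n)<0<G(b_n)$, which holds because a non-terminating step has $G(m)\ne 0$;
\item the single limit $c$, forced by $b_n-a_n=(b-a)/2^n\to 0$;
\item the conclusion $0\le G(c)\le 0$, obtained from sequential continuity.
\end{itemize}
You also correctly identify completeness, entering through monotone convergence, as the only essential input.

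A small bonus of your route is that it can run entirely on the paper's own tool. Apply Theorem~\ref{thm:monotone-limit} to the step function equal to $b_n$ on $(1/(n+1),1/n]$, and similarly to $-a_n$. This gives the needed sequence limits, so you need not quote the sequence version separately.
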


We will apply the IVT to $g$ only after establishing that it is well-defined and continuous.

\section{Why the slope $g(b)$ exists}\label{sec:existence}

We begin by proving that the exponential function is convex without differentiating it.

\begin{proposition}\label{prop:exp-convex}
For every $b>0$, the function $x\mapsto b^x$ is convex on $\mathbb{R}$.
\end{proposition}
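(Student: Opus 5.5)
The plan is to reduce the claim to Lemma~\ref{lem:midpoint-to-convex}: we show that $f(x):=b^x$ is continuous and midpoint convex on $\mathbb{R}$, and the lemma then upgrades midpoint convexity to full convexity. Continuity is part of the construction of real powers summarized in Appendix~\ref{app:real-exponents}. There $b^x$ is defined as the continuous extension of rational powers, so we may take continuity of $x\mapsto b^x$ on $\mathbb{R}$ as given, together with the exponent laws \eqref{eq:exp-laws} and positivity $b^x>0$.

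For midpoint convexity, fix $x,y\in\mathbb{R}$ and set $u:=b^{x/2}$ and $v:=b^{y/2}$, both positive real numbers. By the exponent law \eqref{eq:exp-laws}, $b^{x}=b^{x/2}b^{x/2}=u^2$, and similarly $b^y=v^2$ and $b^{(x+y)/2}=b^{x/2}b^{y/2}=uv$. The midpoint inequality
$$
b^{(x+y)/2}\ \le\ \frac{b^x+b^y}{2}
$$
therefore becomes $uv\le (u^2+v^2)/2$. This is equivalent to $(u-v)^2\ge 0$, i.e., the AM--GM inequality for two numbers. No derivatives are used, which matches the stated goal of proving convexity without differentiating.

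Combining the two facts, Lemma~\ref{lem:midpoint-to-convex} applies on the interval $I=\mathbb{R}$ and gives convexity of $x\mapsto b^x$. Note that the argument covers every $b>0$ uniformly, including $b=1$ (a constant function) and $0<b<1$, since only positivity and the exponent laws were used.

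The algebra is trivial, so the main point needing care is justifying the inputs rather than the inequality itself. Specifically, we must confirm that the real-exponent construction really delivers continuity on all of $\mathbb{R}$ and the law $b^{x+y}=b^xb^y$ for arbitrary real (not just rational) $x,y$, so that halving exponents is legitimate. If only rational-exponent facts were available, I would instead prove midpoint convexity on $\mathbb{Q}$ and pass to $\mathbb{R}$ by density and continuity, in the same manner as the proof of Lemma~\ref{lem:midpoint-to-convex}.
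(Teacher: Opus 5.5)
Your proposal is correct and matches the paper's proof: continuity comes from the real-exponent construction, midpoint convexity from two-variable AM--GM, and Lemma~\ref{lem:midpoint-to-convex} then gives full convexity. The only cosmetic difference is that you substitute $u=b^{x/2}$, $v=b^{y/2}$ to reduce to $(u-v)^2\ge 0$, whereas the paper applies AM--GM directly to $b^x,b^y$ and uses $\sqrt{b^xb^y}=b^{(x+y)/2}$. This is the same inequality.
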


\begin{proof}
The function is continuous by our standing convention.  Recall the two-variable arithmetic-geometric mean inequality (AM-GM): for $u,v\ge 0$,
\begin{equation}\label{e:AMGM2}
\frac{u+v}{2}\ \ge\ \sqrt{uv}.
\end{equation}
Notice \eqref{e:AMGM2} follows immediately from $(\sqrt{u}-\sqrt{v})^2\ge0$; see also, for example, \cite[p.~978, Exercise~49]{Ste}.  Applying AM-GM with $u=b^x$ and $v=b^y$ gives
$$
\frac{b^x+b^y}{2}\ \ge\ \sqrt{b^x b^y}\ =\  b^{(x+y)/2}.
$$
The final equality uses the real-exponent laws summarized in Appendix~\ref{app:real-exponents}.  Thus $b^x$ is midpoint convex, and Lemma \ref{lem:midpoint-to-convex} upgrades midpoint convexity to convexity.
\end{proof}

For $b>0$ and $h\ne0$, set
\begin{equation}\label{eq:qbh}
 q_b(h)\ :=\ \frac{b^h-1}{h}.
\end{equation}
By Lemma \ref{lem:secant}, $q_b(h)$ is non-decreasing in $h$ on $(0,\infty)$ and on $(-\infty,0)$.

\begin{theorem}\label{thm:g-exists}
For every $b>0$, the limit
$$g(b)\ =\ \lim_{h\to0}\frac{b^h-1}{h}$$
exists.
\end{theorem}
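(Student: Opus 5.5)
We will obtain $g(b)$ by computing the two one-sided limits separately and then showing that they agree. The tools are Theorem~\ref{thm:monotone-limit} together with the monotonicity of $q_b$ recorded after \eqref{eq:qbh}, which comes from Proposition~\ref{prop:exp-convex} and Lemma~\ref{lem:secant}.

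\emph{Right-hand limit.} On $(0,1]$ the function $q_b$ is non-decreasing, so it remains to find a lower bound. Lemma~\ref{lem:secant} applies across $0$ as well, because convexity with $0$ as the middle point gives that the slope of a chord to the left is at most the slope of a chord to the right. We can see this directly without invoking the cross-zero version: for $h>0$,
$$q_b(-h)\ =\ \frac{b^{-h}-1}{-h}\ =\ \frac{b^h-1}{h}\,b^{-h}\ =\ b^{-h}q_b(h).$$
Since $b^{-h}>0$, the two quotients have the same sign. If $q_b(h)\ge0$, we get $q_b(-h)\le q_b(h)$ provided $b^{-h}\le 1$, that is, when $b\ge1$. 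The case $b<1$ needs separate handling, so it is cleaner to use the convexity inequality with $0=\tfrac{h}{h+k}(-k)+\tfrac{k}{h+k}h$. This gives $q_b(-k)\le q_b(h)$ for all $h,k>0$. In particular $q_b(h)\ge q_b(-1)$ on $(0,1]$. Theorem~\ref{thm:monotone-limit} then shows that $L^+:=\lim_{h\to0^+}q_b(h)$ exists.

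\emph{Left-hand limit.} On $[-1,0)$ the function $q_b$ is non-decreasing and bounded above by $q_b(1)$ by the same cross-zero inequality. Applying Theorem~\ref{thm:monotone-limit} to $F(x):=-q_b(-x)$ on $(0,1]$, we see that $F$ is non-decreasing and bounded below by $-q_b(1)$. Hence $L^-:=\lim_{h\to0^-}q_b(h)$ exists.

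\emph{Equality.} The identity $q_b(-h)=b^{-h}q_b(h)$ shows that the one-sided limits agree. As $h\to0^+$ we have $b^{-h}\to b^0=1$ by continuity of the exponential, so $L^-=\lim_{h\to0^+}q_b(-h)=1\cdot L^+$. Thus the two-sided limit exists.

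The main obstacle is the lower bound for the right-hand quotients, i.e.\ the cross-zero chord comparison, since Lemma~\ref{lem:secant} is stated only on each side of $0$. We will prove it directly from Definition~\ref{def:convex} with $t=h/(h+k)$, and the final step then only uses the exponent law $b^{-h}b^h=1$ and continuity.
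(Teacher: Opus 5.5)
Your proof is correct, but it gets the key lower bound differently from the paper. The paper first takes $b>1$, where the right-hand quotients are bounded below by $0$ simply because $b^h>1$ for $h>0$. It then handles $b=1$ trivially and reduces $0<b<1$ to $c=1/b>1$ via $q_b(h)=-q_c(-h)$.

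You instead prove the cross-zero chord inequality $q_b(-k)\le q_b(h)$ for all $h,k>0$ directly from Definition~\ref{def:convex} with $t=h/(h+k)$. This gives $q_b(h)\ge q_b(-1)$ on $(0,1]$ for every $b>0$ at once. Your route buys a proof with no case split and no appeal to the sign of $b^h-1$. In effect you show that any convex function has both one-sided derivatives at an interior point. The exponential structure enters only at the end, through $q_b(-h)=b^{-h}q_b(h)$, to show the two one-sided limits coincide; the paper uses that same identity for that step. The paper's route, in exchange, needs only Lemma~\ref{lem:secant} as stated (one side of $0$ at a time), which is slightly lighter.

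One streamlining: your separate paragraph proving that $L^-$ exists via $F(x)=-q_b(-x)$ is redundant. Since $b^{-h}\to1$ and $q_b(h)\to L^+$, the identity already shows that $\lim_{h\to0^+}q_b(-h)$ exists and equals $L^+$.
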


\begin{proof}
First suppose $b>1$.  For $h>0$, we have $q_b(h)>0$.  Since $q_b$ is non-decreasing as a function of $h$, Theorem \ref{thm:monotone-limit} shows that
$$
 L\ :=\ \lim_{h\to0^+}q_b(h)
$$
exists.

It remains to compare the two one-sided limits.  Set $h=-t$, where $t>0$.  Using the exponent laws,
$$q_b(-t)\ =\ \frac{b^{-t}-1}{-t}
 \ =\ \frac{1-b^{-t}}{t}\ =\ b^{-t}\frac{b^t-1}{t}
 \ =\ b^{-t}q_b(t).$$
As $t\to0^+$, continuity of $b^x$ gives $b^{-t}\to1$, and $q_b(t)\to L$.  Hence,
$$\lim_{h\to0^-}q_b(h)\ =\ L,$$
so the two-sided limit exists.

If $b=1$, then $q_1(h)=0$ for every $h\ne0$.  Finally, if $0<b<1$, let $c=1/b>1$.  Then
$$q_b(h)\ =\ \frac{c^{-h}-1}{h}\ =\ -q_c(-h),$$
and the existence of $\lim_{h\to0}q_c(h)$ implies the existence of $\lim_{h\to0}q_b(h)$.  Thus $g(b)$ exists for every $b>0$.
\end{proof}

Question \ref{q:existence} is now settled.  Notice that we did not compute $g(b)$ because doing so would require additional machinery concerning exponential and logarithmic functions, while our argument needs only qualitative properties of $g$, namely continuity and monotonicity.

\section{Putting the target slope between bases $2$ and $3$}\label{sec:bounds}

We answer Question \ref{q:bounds} without using any derivative formula.
The monotonicity of the secant slopes gives rigorous estimates using only convenient choices of $h$.  The fractions below are not magic.  To force $g(2)<1$, it is enough to find a positive $h$ for which $q_2(h)<1$; choosing $h=1/m$ asks for $2^{1/m}<1+1/m$, and $m=3$ is a small integer for which the comparison is immediate after cubing.  Similarly, to force $g(3)>1$ from the negative side, choosing $h=-1/m$ asks for $3^{1/m}>m/(m-1)$; $m=6$ makes this an elementary integer comparison.  This leads to the choices $h=1/3$ and $h=-1/6$ used below.

\begin{proposition}\label{prop:2-3}
We have
$$g(2)\ <\ 1\ <\ g(3).$$
\end{proposition}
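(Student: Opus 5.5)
The plan is to combine the monotonicity of the secant quotients $q_b(h)$ from Lemma~\ref{lem:secant} (applied to the convex function $x\mapsto b^x$ of Proposition~\ref{prop:exp-convex}) with the existence of the limit $g(b)$ from Theorem~\ref{thm:g-exists}. Since $q_b$ is non-decreasing on $(0,\infty)$, the limit $g(b)=\lim_{h\to0^+}q_b(h)$ is the infimum of $q_b$ over $h>0$. Hence $g(b)\le q_b(h)$ for every $h>0$. Similarly, $q_b$ is non-decreasing on $(-\infty,0)$, so $g(b)=\lim_{h\to0^-}q_b(h)$ is the supremum of $q_b$ over $h<0$, giving $g(b)\ge q_b(h)$ for every $h<0$. (This supremum version is the mirror image of Theorem~\ref{thm:monotone-limit} and is proved the same way.) These two facts give one-sided bounds, but we want strict inequalities. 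For these I will use two values of $h$ on each side.

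For the upper bound on $g(2)$, take $h=1/3$. We need $q_2(1/3)<1$, i.e.\ $2^{1/3}<4/3$. Cubing, this is $2<64/27$, i.e.\ $54<64$, which is true. Then $g(2)\le q_2(1/3)<1$. The bound is strict because $q_2(1/3)<1$ strictly, so no second value is needed here. Note that cubing is legitimate because $t\mapsto t^3$ is strictly increasing on positive reals, and $(2^{1/3})^3=2$ follows from the exponent laws \eqref{eq:exp-laws}.

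For the lower bound on $g(3)$, take $h=-1/6$. Then $q_3(-1/6)=6(1-3^{-1/6})$, and we need this to exceed $1$. That is, $3^{-1/6}<5/6$, or equivalently $3^{1/6}>6/5$. Raising to the sixth power, this is $3>6^6/5^6=46656/15625$. Since $3\cdot15625=46875>46656$, the inequality holds. Therefore $g(3)\ge q_3(-1/6)>1$.

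The only real subtlety is the use of $g(b)\ge q_b(h)$ for $h<0$. This needs the supremum form of the monotone-limit principle on the left side of $0$, which the paper has not stated explicitly. I would handle it directly: for $h<h'<0$, Lemma~\ref{lem:secant} gives $q_3(h)\le q_3(h')$. Letting $h'\to0^-$ and using Theorem~\ref{thm:g-exists} gives $q_3(h)\le g(3)$. The same limiting argument for $0<h'<h$ justifies $g(2)\le q_2(1/3)$ without invoking the infimum characterization. The arithmetic itself is routine.
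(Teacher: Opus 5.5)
Your proof is correct and follows the paper's argument: the same choices $h=1/3$ and $h=-1/6$, and the same reductions to $2<64/27$ and $3\cdot 5^6>6^6$. Your limiting argument for $q_b(h)\le g(b)$ when $h<0$ spells out a step the paper takes directly from monotonicity, and the remark about needing two values of $h$ on each side is unnecessary (as you note yourself), since one strict secant bound per side suffices.
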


\begin{proof}
Because $q_2$ is non-decreasing on $(0,\infty)$,
$$g(2)\ =\ \lim_{h\to0^+}q_2(h)\ \le\  q_2\left(\frac{1}{3}\right)\ =\ 3\left(2^{1/3}-1\right).$$
Now $2^{1/3}<4/3$ because $2<64/27$.  Therefore,
$$g(2)\ <\ 3\left(\frac{4}{3}-1\right)\ =\ 1.$$

For base $3$, the function $q_3$ is non-decreasing on $(-\infty,0)$.  Hence,
$$g(3)\ =\ \lim_{h\to0^-}q_3(h)\ \ge\  q_3\left(-\frac{1}{6}\right)
 \ =\ 6\left(1-3^{-1/6}\right).$$
Moreover,
$$3\ >\ \left(\frac65\right)^6$$
because $3\cdot5^6=46875>46656=6^6$.  Thus, $3^{1/6}>6/5$, so $3^{-1/6}<5/6$, and consequently,
$$g(3)\ >\ 6\left(1-\frac56\right)\ =\ 1.$$
\end{proof}

\section{Continuity, uniqueness, and the definition of $e$}\label{sec:continuity}

A simple identity reveals the structure of $g$.

\begin{proposition}[A logarithmic law]\label{prop:loglaw}
For all $a,b>0$,
\begin{equation}\label{eq:loglaw}
 g(ab)\ =\ g(a)+g(b).
\end{equation}
In particular, $g(1)=0$ and $g(1/b)=-g(b)$.
\end{proposition}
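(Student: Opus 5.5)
The identity should follow by splitting the difference quotient for the product base. Using the exponent law $(ab)^h=a^hb^h$ from Appendix~\ref{app:real-exponents}, I will write, for $h\ne0$,
$$
q_{ab}(h)\ =\ \frac{a^hb^h-1}{h}\ =\ \frac{a^hb^h-a^h+a^h-1}{h}\ =\ a^h\,q_b(h)+q_a(h).
$$
Then I will let $h\to0$. By Theorem~\ref{thm:g-exists}, $q_a(h)\to g(a)$, $q_b(h)\to g(b)$ and $q_{ab}(h)\to g(ab)$. By continuity of $x\mapsto a^x$ at $0$, together with $a^0=1$, we have $a^h\to1$. The limit laws for products and sums then give $g(ab)=1\cdot g(b)+g(a)$, which is \eqref{eq:loglaw}.

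For the special cases, I will take $a=b=1$ in \eqref{eq:loglaw}. This gives $g(1)=2g(1)$, so $g(1)=0$; this also agrees with the direct observation $q_1\equiv0$ in the proof of Theorem~\ref{thm:g-exists}. Next I will take $a=1/b$, which gives $0=g(1)=g(1/b)+g(b)$, so $g(1/b)=-g(b)$.

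The only point needing care is the law $(ab)^h=a^hb^h$ for irrational $h$. The displayed laws \eqref{eq:exp-laws} concern a single base, so I will justify this one separately. For rational $h$ it is algebraic. For real $h$, I will choose rationals $r_n\to h$ and pass to the limit in $(ab)^{r_n}=a^{r_n}b^{r_n}$, using the definition of real powers as limits along rational sequences; this relies on the appendix construction.
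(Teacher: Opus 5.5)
Your proposal is correct and follows essentially the same route as the paper: the same splitting $(ab)^h-1=a^h(b^h-1)+(a^h-1)$, the limit $a^h\to1$, and the substitution $1=b\cdot(1/b)$. The only differences are small refinements. You justify $(ab)^h=a^hb^h$ for irrational $h$ by rational approximation, where the paper simply cites the appendix. You also obtain $g(1)=0$ from $g(1)=2g(1)$, where the paper uses $q_1\equiv0$ directly.
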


\begin{proof}
Using $(ab)^h=a^h b^h$, which holds for real $h$ by Appendix~\ref{app:real-exponents}, we first expand
$$
(ab)^h-1\ =\ a^hb^h-1\ =\ a^h(b^h-1)+(a^h-1).
$$
After dividing by $h\ne0$,
$$
\frac{(ab)^h-1}{h}\ =\ a^h\frac{b^h-1}{h}+\frac{a^h-1}{h}.
$$
As $h\to0$, continuity of the real exponential gives $a^h\to1$, while the two quotients tend to $g(b)$ and $g(a)$ by definition.  Taking limits therefore gives \eqref{eq:loglaw}.  Since $q_1(h)=0$, we have $g(1)=0$.  Finally, $1=b(1/b)$, so \eqref{eq:loglaw} gives 
$$0\ =\ g(1)\ =\ g(b)+g(1/b),$$ 
and therefore, $g(1/b)=-g(b)$.
\end{proof}

The same identity makes continuity almost automatic.

\begin{proposition}\label{prop:g-continuous}
The function $g:(0,\infty)\to\mathbb{R}$ is continuous.
\end{proposition}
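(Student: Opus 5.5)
The plan is to use the logarithmic law of Proposition~\ref{prop:loglaw} to reduce continuity at an arbitrary point to continuity at $1$, and then to prove continuity at $1$ by a squeeze argument built on the monotonicity of the secant quotients.

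\textbf{Reduction to $b=1$.} Fix $b>0$ and let $b_n\to b$ (or argue with $\varepsilon$--$\delta$ directly). Writing $b_n = b\cdot (b_n/b)$, Proposition~\ref{prop:loglaw} gives
$$g(b_n)-g(b)\ =\ g(b_n/b),$$
and $b_n/b\to 1$. So it suffices to show $g(c)\to 0 = g(1)$ as $c\to 1$. Using $g(1/c)=-g(c)$, it is enough to treat $c\to1^+$.

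\textbf{Monotonicity of $g$.} Next I would record that $g$ is non-decreasing. For $1\le a<c$ and $h>0$ we have $a^h\le c^h$, because $(c/a)^h\ge 1$ for $c/a\ge1$ and $h>0$ (Appendix~\ref{app:real-exponents}). Hence $q_a(h)\le q_c(h)$, and letting $h\to0^+$ gives $g(a)\le g(c)$. In particular $g(c)\ge g(1)=0$ for $c\ge 1$. Together with the logarithmic law, monotonicity on $[1,\infty)$ extends to all of $(0,\infty)$, though only $c>1$ is needed here.

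\textbf{Upper bound near $1$.} This is the step I expect to be the main obstacle: producing an upper bound on $g(c)$ that tends to $0$ as $c\to1^+$. The tool is the one used in Proposition~\ref{prop:2-3}, that $g(c)\le q_c(h)$ for every $h>0$. Take $h=1$:
$$0\ \le\ g(c)\ \le\ q_c(1)\ =\ c-1\ \to\ 0 \quad\text{as } c\to1^+.$$
This bound makes continuity at $1$ immediate. Combined with the reduction, we get $|g(b_n)-g(b)|=|g(b_n/b)|\le |b_n/b-1|$ whenever $b_n\ge b$. When $b_n<b$, we use $g(b_n/b)=-g(b/b_n)$, which gives the bound $b/b_n-1$. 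Both bounds tend to $0$, and this completes the proof.
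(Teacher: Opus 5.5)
Your proof is correct and follows the paper's route: the logarithmic law reduces continuity at an arbitrary point to continuity at $1$, which is then obtained by squeezing $g$ using the secant bound $g(c)\le q_c(1)=c-1$. The one difference is the lower bound: the paper takes the secant at $h=-1$ to get $1-\frac{1}{b}\le g(b)\le b-1$ for all $b>0$ at once, so it needs neither your separate monotonicity step nor the case split via $g(1/c)=-g(c)$ (indeed $q_b(-1)=1-\frac{1}{b}=-q_{1/b}(1)$, so the two bounds carry the same information).
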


\begin{proof}
We first prove continuity at $1$.  Recall from \eqref{eq:qbh} that $q_b(h)=(b^h-1)/h$.  The secant estimates at $h=-1$ and $h=1$ give, for every $b>0$,
$$
q_b(-1)\ \le\ g(b)\ \le\ q_b(1).
$$
Since
$$
q_b(-1)\ =\ 1-\frac{1}{b}
\quad\mbox{ and }\quad
q_b(1)\ =\ b-1,
$$
we obtain the useful two-sided estimate
\begin{equation}\label{eq:g-squeeze}
1-\frac{1}{b}\ \le\ g(b)\ \le\ b-1.
\end{equation}
Both bounds tend to $0$ as $b\to1$, so the Squeeze Theorem gives $g(b)\to0=g(1)$.

Now fix $b^*>0$.  For $b$ near $b^*$, write $b=b^*(b/b^*)$.  Proposition \ref{prop:loglaw} gives
$$
g(b)\ =\ g\left(b^*\frac{b}{b^*}\right)\ =\ g(b^*)+g\left(\frac{b}{b^*}\right),
$$
so, after subtracting $g(b^*)$,
$$
 g(b)-g(b^*)\ =\ g\left(\frac{b}{b^*}\right).
$$
As $b\to b^*$, we have $b/b^*\to1$, so continuity at $1$ implies $g(b)\to g(b^*)$.
\end{proof}

We also need to show that $e$ is the unique base with $g(e) = 1$.  For $c>1$, Lemma \ref{lem:secant} gives
$$g(c)\ =\ \lim_{h\to0^-}q_c(h)\ \ge\ q_c(-1)\ =\ 1-\frac{1}{c}\ >\ 0.$$
Therefore, if $0<a<b$, then $b/a>1$ and
$$g(b)-g(a)\ =\ g(b/a)\ >\ 0.$$
Therefore, $g$ is strictly increasing.

We can now complete Stewart's geometric construction.

\begin{theorem}\label{thm:e}
There is a unique number $e\in(2,3)$ such that
$$g(e)\ =\ 1.$$
Equivalently, $e$ is the unique positive base for which the graph of $y=e^x$ has tangent slope $1$ at $(0,1)$.
\end{theorem}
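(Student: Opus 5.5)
The plan is to combine the three ingredients already established: continuity of $g$ (Proposition~\ref{prop:g-continuous}), the bounds $g(2)<1<g(3)$ (Proposition~\ref{prop:2-3}), and the strict monotonicity of $g$ proved just before the theorem.

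For \emph{existence}, restrict $g$ to the closed interval $[2,3]$. It is continuous there by Proposition~\ref{prop:g-continuous}. Since $C=1$ lies strictly between $g(2)$ and $g(3)$, the Intermediate Value Theorem (Theorem~\ref{thm:ivt}) produces some $e\in[2,3]$ with $g(e)=1$. The endpoints are excluded because $g(2)\neq 1$ and $g(3)\neq 1$, so in fact $e\in(2,3)$.

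For \emph{uniqueness}, I would argue on all of $(0,\infty)$, not merely on $(2,3)$. Suppose $a\neq b$ are positive with $g(a)=g(b)=1$, and say $a<b$. Strict monotonicity, which rests on the identity $g(b)-g(a)=g(b/a)$ together with $g(c)\ge 1-1/c>0$ for $c>1$, gives $g(b)>g(a)$, a contradiction. Hence $e$ is the only positive base with slope $1$. This also explains why uniqueness is genuinely needed beyond the IVT: the IVT alone only gives existence.

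For the \emph{equivalent geometric formulation}, recall that by definition~\eqref{eq:defg} and Theorem~\ref{thm:g-exists}, $g(b)$ is precisely the derivative of $x\mapsto b^x$ at $x=0$, that is, the slope of the tangent line at $(0,1)$, since $b^0=1$. So ``$g(e)=1$'' and ``the tangent slope of $y=e^x$ at $(0,1)$ is $1$'' are the same statement, and uniqueness transfers verbatim.

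There is no serious obstacle here, since all the work was done earlier. The only points requiring care are to note explicitly that the IVT is applied on a closed interval where continuity holds, and that the strictness of the inequalities in Proposition~\ref{prop:2-3} is what places $e$ in the open interval.
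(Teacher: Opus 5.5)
Your proposal is correct and follows the paper's proof: the IVT on $[2,3]$ (using continuity of $g$ and $g(2)<1<g(3)$) gives existence, and strict monotonicity of $g$ on $(0,\infty)$ gives uniqueness. Your extra remarks, on excluding the endpoints and on reading $g(e)=1$ as the tangent-slope condition, simply spell out steps the paper leaves implicit.
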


\begin{proof}
By Proposition \ref{prop:2-3}, $g(2)<1<g(3)$.  Proposition \ref{prop:g-continuous} and the IVT give some $e\in(2,3)$ with $g(e)=1$.  Since $g$ is strictly increasing, that $e$ is unique.
\end{proof}

Thus all three questions from the introduction have been answered.

\section{Why this base is natural for calculus}\label{sec:natural}

The definition $g(e)=1$ is not merely aesthetically pleasing.  It immediately produces the cleanest possible derivative rule for an exponential function.

There is also a useful structural lesson here: for an exponential function, knowing the derivative at a single point, namely $x=0$, gives the derivative everywhere.  The reason is the exponent law $b^{x+h}=b^x b^h$, which factors every difference quotient into a fixed factor $b^x$ and the same quotient that appears at the origin.  Students may recognize a similar phenomenon from trigonometric differentiation: angle-addition formulas reduce the derivatives of $\sin x$ and $\cos x$ to special limits at $0$, such as $\lim_{h\to0}\sin h/h=1$.

\begin{proposition}\label{prop:derivative}
For every $b>0$ and every real $x$,
\begin{equation}\label{eq:derivative-bx}
 \frac{d}{dx}b^x\ =\ g(b)b^x.
\end{equation}
In particular,
\begin{equation}\label{eq:derivative-ex}
 \frac{d}{dx}e^x\ =\ e^x.
\end{equation}
\end{proposition}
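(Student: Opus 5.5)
We work directly from the definition of the derivative and reduce the difference quotient at an arbitrary point $x$ to the quotient at the origin. Fix $b>0$ and a real number $x$. For $h\ne0$, the exponent law \eqref{eq:exp-laws} gives $b^{x+h}=b^xb^h$, so
$$
\frac{b^{x+h}-b^x}{h}\ =\ \frac{b^xb^h-b^x}{h}\ =\ b^x\,\frac{b^h-1}{h}\ =\ b^x q_b(h).
$$
The factor $b^x$ does not depend on $h$. This is the structural observation made just before the statement: every difference quotient of $b^x$ is a constant multiple of the single quotient $q_b$ from \eqref{eq:qbh}.

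We then let $h\to0$. By Theorem \ref{thm:g-exists}, the two-sided limit $\lim_{h\to0}q_b(h)=g(b)$ exists for every $b>0$. The constant-multiple rule for limits therefore shows that $\lim_{h\to0}b^xq_b(h)$ exists and equals $b^xg(b)$. Hence $b^x$ is differentiable at $x$, and its derivative is $g(b)b^x$, which is \eqref{eq:derivative-bx}. We need no case split on whether $b>1$, $b=1$, or $b<1$, because Theorem \ref{thm:g-exists} already handled all positive bases.

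For \eqref{eq:derivative-ex}, we specialize to $b=e$, where $e$ is the base provided by Theorem \ref{thm:e}. Since $g(e)=1$, the general formula becomes $\frac{d}{dx}e^x=e^x$.

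No step here is a real obstacle; all the analytic difficulty was absorbed into Theorem \ref{thm:g-exists}. The only point to state explicitly is that the factorization $b^{x+h}=b^xb^h$ is valid for arbitrary real $x$ and $h$, including irrational values. This relies on the real-exponent laws from Appendix~\ref{app:real-exponents}, not only on their rational versions. The argument also needs the two-sided limit $h\to0$, not just the one-sided limit $h\to0^+$, and Theorem \ref{thm:g-exists} provides exactly that.
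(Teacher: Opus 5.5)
Your proof is correct and is essentially the paper's argument: you factor the difference quotient as $b^x q_b(h)$ using the exponent law, pass to the limit using the existence of $g(b)$ from Theorem~\ref{thm:g-exists}, and then specialize to $b=e$ using $g(e)=1$. Your remarks that this needs the real-exponent laws and the two-sided limit are accurate, and they make explicit points the paper leaves implicit.
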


\begin{proof}
Using the exponent law,
$$\frac{b^{x+h}-b^x}{h}\ =\ b^x\frac{b^h-1}{h}.$$
Letting $h\to0$ gives \eqref{eq:derivative-bx}.  For $b=e$, Theorem \ref{thm:e} gives $g(e)=1$, yielding \eqref{eq:derivative-ex}.
\end{proof}

This is the fundamental reason $e$ deserves the adjective \textit{natural}: differentiation leaves the exponential function $e^x$ unchanged, unlike any other exponential function.  Once the natural logarithm $\ln x$ is introduced as the inverse of $e^x$, its derivative follows from the chain rule.  Since
\begin{equation}\label{e: eid}
e^{\ln x}\ =\ x, \quad x>0,
\end{equation}
differentiating both sides gives
$$
e^{\ln x}(\ln x)'\ =\ 1.
$$
Applying \eqref{e: eid}, we obtain
$$
\frac{d}{dx}\ln x\ =\ \frac1x.
$$

We can now differentiate $x^{\sqrt{3}}$ using the chain rule. Note that we cannot mimic the standard proofs for differentiating $x^2$ or $x^3$, since the usual binomial theorem applies only to nonnegative integer exponents.
For any real number $\alpha$ and $x>0$,
$x^\alpha=e^{\alpha\ln x}$.
Therefore, by the chain rule,
$$
\frac{d}{dx}x^\alpha
\ =\
e^{\alpha\ln x}\frac{\alpha}{x}
\ =\ 
\alpha x^{\alpha-1}.
$$
Thus, the earlier question of how to differentiate $x^{\sqrt{3}}$ is no longer mysterious. A detailed comparison of the methods for differentiating different powers is given in Appendix \ref{app:three-powers}.

This viewpoint also connects to the other standard definitions of $e$.  Once the inverse-function theorem and the Fundamental Theorem of Calculus are available, $\ln e=1$ becomes the area characterization \eqref{eq:area}.  Taylor's theorem applied to $e^x$ yields \eqref{eq:series}.  Standard logarithmic limit arguments then recover the compound-interest formula \eqref{eq:compound}.  In this sense, the tangent-slope definition is not a competing version of $e$; it is an entry point into the same theory.

\section{Appearances of $e$ after the construction}\label{sec:appearances}

 We close with two examples in which $e$ appears for conceptually different reasons.

\subsection{Breaking a sum to maximize a product}

Suppose a positive total $S$ is split into $n$ positive real pieces
$$x_1+\cdots+x_n\ =\ S.$$
For fixed $n$, the $n$-variable AM-GM inequality \cite[p.~978, Exercise~49]{Ste} says
$$
\frac{x_1+\cdots+x_n}{n}\ \ge\ (x_1\cdots x_n)^{1/n}.
$$
Because $x_1+\cdots+x_n=S$, raising both sides to the $n$\textsuperscript{th} power gives
$$
x_1\cdots x_n\ \le\ \left(\frac{S}{n}\right)^n,
$$
with equality exactly when all pieces are equal.  Thus, for fixed $n$, equal pieces maximize the product.
Now temporarily allow $n$ to vary continuously and consider
$$
 P(n)\ =\ \left(\frac{S}{n}\right)^n.
$$
Taking logarithms is convenient because $\ln$ is strictly increasing, so maximizing $P(n)$ is equivalent to maximizing $\ln P(n)$.  We have
$$
 \ln P(n)\ =\ n\ln\left(\frac{S}{n}\right)
 \ =\ n(\ln S-\ln n).
$$
Treat $n>0$ temporarily as a real variable.  Differentiating with the product rule gives
\begin{align*}
\frac{d}{dn}\ln P(n)
 &\ =\ (\ln S-\ln n)+n\left(-\frac1n\right)\\
 &\ =\ \ln S-\ln n-1.
\end{align*}
Thus, a critical point must satisfy
$$
 \ln S-\ln n-1\ =\ 0.
$$
Equivalently, $\ln(S/n)=1$, so $S/n=e$ and hence $n=S/e$.  Moreover,
$$
\frac{d^2}{dn^2}\ln P(n)\ =\ -\frac1n<0,
$$
so this critical point is indeed a maximum.  Thus the optimal common piece in the continuous setting is
$$
 \frac{S}{n}\ =\ e.
$$
There is also a classical integer version: when one must split an integer into positive integer summands, the maximizing decomposition uses as many $3$'s as possible, with a small adjustment when the remainder is $1$.  Appendix~\ref{app:integer-product} gives the short elementary argument.  The continuous calculation above explains conceptually why $3$ is the preferred integer: it is the integer closest to $e$. 

This optimization principle also has an interpretation in data storage. Suppose a system consists of several independent components, where the $i$th component can be in $x_i$ possible states. Then the total number of distinct configurations that the system can represent is
$x_1x_2\cdots x_n$.
Thus, if the total number of available states $x_1+\cdots+x_n$ is fixed, maximizing the product amounts to maximizing the number of configurations that can be stored. 

Last but not least, a closely related phenomenon appears in the problem of choosing an
\emph{optimal radix} for representing numbers. Under one natural model,
the asymptotic cost of representing numbers in base $b$ is proportional to $\frac{b}{\ln b}$.
If $b$ is allowed to vary continuously, this quantity is minimized at
$b=e$; among integer bases, the optimum is therefore attained at $b=3$.
This provides another appearance of the same principle: a continuous
optimization singles out $e$, while the corresponding discrete problem
naturally favors $3$ \cite{OptimalRadix}.

\subsection{Derangements and the probability $1/e$}

Imagine $n$ labeled chocolates placed randomly into $n$ labeled positions, one chocolate per position.  What is the probability that every chocolate lands in the wrong position?  A \textit{derangement} is a permutation with no fixed points; let $D_n$ denote the number of derangements of $n$ objects.  Since there are $n!$ permutations in total, the desired probability is $D_n/n!$.  

To compute $D_n$, let $A_i$ be the set of permutations in which the $i$\textsuperscript{th} chocolate is in its correct position. We want to count permutations belonging to none of the sets
$A_1,\dots,A_n$. By the Inclusion-Exclusion Principle,
$$D_n\ =\ n!
-\binom n1 (n-1)!
+\binom n2 (n-2)!
-\cdots
+(-1)^n.$$
Indeed, there are $(n-1)!$ permutations fixing a specified chocolate,
$(n-2)!$ permutations fixing two specified chocolates, and, more generally,
$(n-k)!$ permutations fixing any specified set of $k$ chocolates. Hence
$$D_n \ =\ \sum_{k=0}^n (-1)^k \binom nk (n-k)!.$$
Since
$$\binom nk (n-k)!\ =\ \frac{n!}{k!},$$
we obtain
$$D_n\ =\ n!\sum_{k=0}^n \frac{(-1)^k}{k!}.$$
Dividing by $n!$ gives
$$
\frac{D_n}{n!}\ =\ 
\sum_{k=0}^n \frac{(-1)^k}{k!}.$$
Therefore, as $n\rightarrow\infty$,
$$\frac{D_n}{n!}\ \longrightarrow\
\sum_{k=0}^{\infty}\frac{(-1)^k}{k!}\ =\ e^{-1}.$$

\subsection{Further appearances of $e$}
A classical appearance is Stirling's approximation
$$
 n!\ \sim\ \sqrt{2\pi n}\left(\frac{n}{e}\right)^n,
$$
which gives a remarkably accurate description of the growth of factorials; see Robbins \cite{Rob} for a concise refinement.  

Furthermore, the second named author encountered $e$ repeatedly in standard
probabilistic settings, including the Poisson and exponential distributions
and moment generating functions \cite{MillerProbability}. Related chess-inspired problems provide
further striking examples. Miller et al.\ \cite{MillerShengTurek} showed that when $n$ rooks are placed randomly
on an $n\times n$ chessboard, the expected proportion of safe squares tends to
$1/e^2$. More recently, Cashman et al.\ \cite{CashmanCooperMarquezMillerShuffelton} extended
this phenomenon to higher-dimensional chess pieces; in particular, for
$k$-dimensional hyper-rooks, the limiting proportion of safe squares is
$1/e^k$.

These examples illustrate a broader theme: $e$ is not natural because mathematicians chose a convenient symbol for one isolated limit.  It is natural because the same constant repeatedly emerges from growth, differentiation, optimization, counting, and probability. This illustrates one advantage of studying a broad range of areas in mathematics: a wide mathematical perspective is often needed to recognize connections between seemingly different topics.

\section*{Declaration of generative AI use} 
The authors conceived the present paper, its structure, and mathematical content independently. During the preparation of the paper, ChatGPT (GPT-5.6 Sol) \cite{GPT} was subsequently used to assist with revising and polishing the exposition, reorganizing portions of the paper, suggesting and summarizing references, and checking presentations of some mathematical arguments. The authors take full responsibility for the final content and for verifying all mathematical statements, proofs, and references.

\appendix

\section{What does $b^x$ mean for a real exponent?}\label{app:real-exponents}

For integer exponents, $b^n$ is defined by repeated multiplication, and for rational exponents $p/q$, one defines
$$
b^{p/q}\ =\ \left(\sqrt[q]{b}\right)^p,
$$
with $b>0$.  The remaining step is to define $b^x$ when $x$ is irrational.

One concrete route is to approximate $x$ by rational numbers.  Choose any rational sequence $r_n\to x$: for example, the truncations of the decimal expansion of $x$, and set
\begin{equation}\label{eq:real-exp-appendix}
b^x\ :=\ \lim_{n\to\infty}b^{r_n}.
\end{equation}
To make this a definition, two facts must be checked: that the limit exists, and that it does not depend on the chosen rational sequence.  A useful elementary estimate, proved for example in Badi \cite[Lemma 2]{Badi}, says that for each fixed $b>0$, there is a constant $C_b$ such that
$$
|b^q-1|\ \le\ C_b|q|
$$
for rational $q$ sufficiently close to $0$.  Consequently, if $r_m$ and $r_n$ are close, then
$$
\left|\frac{b^{r_m}}{b^{r_n}}-1\right|
 \ =\ |b^{r_m-r_n}-1|
$$
is small.  This is the key estimate showing that the rational approximations give a well-defined limit.

Once well-definedness is established, the exponent law follows naturally.  If $r_n\to x$ and $s_n\to y$ are rational sequences, then $r_n+s_n\to x+y$, so
\begin{align*}
b^{x+y}
 &\ =\ \lim_{n\to\infty} b^{r_n+s_n}\\
 &\ =\ \lim_{n\to\infty} b^{r_n}b^{s_n}\\
 &\ =\ b^x b^y.
\end{align*}
The same construction gives continuity of $x\mapsto b^x$.  These are the properties used in the main text.  A full development can be found in \cite{Badi}; Eidswick \cite{Eidswick} begins from the same continuous extension of rational powers.

\section{Three powers, three methods}\label{app:three-powers}

The three functions
$$
x^3,\quad x^{3/2},\quad x^{\sqrt3}
$$
look similar, but at an early stage of calculus, studying their derivatives naturally leads to different methods.

\subsection*{An integer exponent}
For $f(x)=x^3$, the difference quotient and the binomial theorem give
\begin{align*}
\frac{(x+h)^3-x^3}{h}
 &\ =\ \frac{3x^2h+3xh^2+h^3}{h}\\
 &\ =\ 3x^2+3xh+h^2.
\end{align*}
Letting $h\to0$ yields $f'(x)=3x^2$.  No exponential or logarithmic functions are needed.

\subsection*{A rational exponent}
For $x>0$, let $f(x)=x^{3/2}$.  Once differentiability of the relevant root function has been established, the algebraic relation
$$
f(x)^2\ =\ x^3
$$
allows us to use implicit differentiation.  Equivalently, apply the product rule to $f(x)f(x)=x^3$:
$$
f'(x)f(x)+f(x)f'(x)\ =\ 3x^2,
$$
so
$$
2f(x)f'(x)\ =\ 3x^2.
$$
Since $f(x) = x^{3/2}$,
$$
f'(x)\ =\ \frac{3x^2}{2x^{3/2}}\ =\ \frac32x^{1/2}.
$$
The same idea works for a general rational exponent $p/q$: if $y=x^{p/q}$, then $y^q=x^p$, so differentiation reduces the problem to integer powers together with the product rule.

Another way to differentiate $x^{3/2}$ is to work directly from the definition of the derivative. The key idea is to multiply by a carefully chosen factor so that the fractional powers disappear. In this case, multiplying by the conjugate-like expression $(x+h)^{3/2}+x^{3/2}$ produces a difference of squares, turning the numerator into $(x+h)^3-x^3$, which can then be handled using the usual polynomial derivative. In particular,
\begin{align*}
f'(x)&\ =\ 
\lim_{h\to 0}
\frac{(x+h)^{3/2}-x^{3/2}}{h}
\cdot
\frac{(x+h)^{3/2}+x^{3/2}}
{(x+h)^{3/2}+x^{3/2}}\\
&\ =\ 
\lim_{h\to 0}
\frac{(x+h)^3-x^3}{h}
\cdot
\lim_{h\to 0}
\frac{1}{(x+h)^{3/2}+x^{3/2}}\\
&\ =\
3x^2\cdot\frac{1}{2x^{3/2}}
\ =\
\frac32x^{1/2}.
\end{align*}
This trick works because $3/2$ is rational: by raising suitable fractional powers to an integer power, we can reduce the calculation to familiar algebraic identities. It naturally raises the question of whether a similar trick could work for an irrational exponent such as $\pi$; there, no finite algebraic manipulation of this kind will eliminate the irrational power.

\subsection*{An irrational exponent}
For $f(x)=x^{\sqrt3}$, there is no polynomial relation $y^q=x^p$ with integers $p$ and $q$ that plays the same role.  We first need a meaning for arbitrary real exponents.  Once the exponential and logarithmic functions are available, we may write
$$
x^{\sqrt3}\ =\ e^{\sqrt3\ln x},\quad x>0,
$$
and the chain rule gives
$$
\frac{d}{dx}x^{\sqrt3}
 \ =\ e^{\sqrt3\ln x}\frac{\sqrt3}{x}
 \ =\ \sqrt3\,x^{\sqrt3-1}.
$$
This is why the theory of exponential and logarithmic functions completes the power rule for arbitrary real exponents.

\section{Why $3$'s maximize the integer product}\label{app:integer-product}

For completeness, here is the elementary argument behind the integer version mentioned in Section~\ref{sec:appearances}.  Suppose a positive integer $S$ is written as a sum of positive integers and we want their product to be as large as possible.

First, no summand $m\ge5$ can occur in an optimal decomposition: replacing $m$ by $2$ and $m-2$ keeps the sum unchanged but changes the product factor from $m$ to
$$
2(m-2)\ >\ m\quad (m>4).
$$
Thus, every summand may be taken from $\{2,3,4\}$.  A $4$ may be replaced by $2+2$ without changing the product, since $4=2\cdot2$.  We are therefore reduced to using only $2$'s and $3$'s.

Finally, three $2$'s should never appear together, because
$$
2+2+2\ =\ 3+3
\quad\text{but}\quad
2^3\ =\ 8\ <\ 9\ =\ 3^2.
$$
Hence, an optimal decomposition uses as many $3$'s as possible, except that a remainder of $1$ is avoided by replacing $3+1$ with $2+2$.  This gives the following rule: use $3$'s, with one $2$ if the remainder is $2$ and two $2$'s if the remainder is $1$.

\end{document}